\documentclass[12pt]{article}

\usepackage[margin=1in]{geometry}
\usepackage{amsmath, amssymb, amsthm}
\usepackage{hyperref}

\theoremstyle{plain}
\newtheorem{theorem}{Theorem}[section]
\newtheorem{lemma}[theorem]{Lemma}
\newtheorem{proposition}[theorem]{Proposition}
\theoremstyle{definition}
\newtheorem{remark}[theorem]{Remark}

\newcommand{\Z}{\mathbb{Z}}

\newcommand{\stirling}[2]{\genfrac\{\}{0pt}{}{#1}{#2}}
\newcommand{\vp}{v_p}

\title{A Proof of Bala's Congruence Conjectures for A158690}
\author{Ahaan Kallat\\
Khoury College of Computer Sciences\\
Northeastern University\\
Boston, MA, USA\\
\texttt{kallat.a@northeastern.edu}}
\date{July 25, 2026}

\begin{document}
\maketitle

\begin{abstract}
Let $a(n)$ be the sequence A158690 in the On-Line Encyclopedia of Integer
Sequences (OEIS), defined by the exponential generating function
$\sum_{n\ge0}a(n)t^n/n!=1+\sum_{m\ge1}\prod_{j=1}^{m}(1-e^{-(2j-1)t})$.
We prove two congruence conjectures of Peter Bala. The first states that, for
every integer $k\ge1$, the sequence $a(n)$ modulo $k$ is eventually periodic
with period dividing $\varphi(k)$; we prove the stronger statement that the
Carmichael function $\lambda(k)$ is an eventual period. The second asserts
that $a(np^r+i)\equiv a(np^{r-1}+i)\pmod{p^r}$ for every $i\ge0$, every
prime $p$, and all $n,r\ge1$. The proof places A158690 in a general class of
exponential generating functions of the form $G(e^t-1)$ with
$G\in\mathbb{Z}[[y]]$, expands the coefficients in the Stirling-number basis,
and reduces both conclusions to elementary congruences for integer powers.
\end{abstract}

\noindent\textbf{Keywords:} integer sequences, congruences, Stirling numbers,
Gauss congruences, exponential generating functions, Carmichael function.

\medskip
\noindent\textbf{2020 Mathematics Subject Classification:} 05A15, 11A07,
11B50, 11B73.

\section{Introduction}

The On-Line Encyclopedia of Integer Sequences records many empirical
observations contributed by its users and labeled ``Conjecture.'' This note
proves two such conjectures, attributed to Peter Bala, for the sequence
A158690 \cite{oeis-a158690}.

The sequence begins
\[
 1,1,5,55,1073,32671,1431665,85363615,\ldots
\]
and is defined by the exponential generating function
\begin{equation}
 F(t):=\sum_{n\ge0}a(n)\frac{t^n}{n!}
 =1+\sum_{m\ge1}\prod_{j=1}^{m}
   \bigl(1-e^{-(2j-1)t}\bigr).
 \label{eq:A158690-egf}
\end{equation}
The sequence occurs among sum-of-finite-product generating functions related
to Fishburn-type structures; see, for example, Hwang and Jin
\cite{hwang-jin}. The OEIS entry also records connections with $q$-series
studied by Zagier \cite{zagier}.

All generating-function identities in this note are identities of formal
power series. In particular, no analytic convergence is needed. In
\eqref{eq:A158690-egf}, the product indexed by $m$ has order at least $m$ at
$t=0$, so every coefficient receives contributions from only finitely many
products.

Bala formulated the following two conjectures on December 18, 2021
\cite{oeis-a158690}.

\begin{enumerate}
\item For every integer $k\ge1$, the sequence $a(n)$ modulo $k$ is eventually
periodic with period dividing $\varphi(k)$.

\item For $i\ge0$, set $a_i(n)=a(n+i)$. For every prime $p$ and all positive
integers $n,r$,
\begin{equation}
 a_i(np^r)\equiv a_i(np^{r-1})\pmod{p^r}.
 \label{eq:bala-gauss}
\end{equation}
\end{enumerate}

We prove both statements through a general observation. Suppose
$G(y)\in\Z[[y]]$ and define $b(n)$ by
\begin{equation}
 G(e^t-1)=\sum_{n\ge0}b(n)\frac{t^n}{n!}.
 \label{eq:general-egf}
\end{equation}
Then the coefficients $b(n)$ are integral linear combinations of the
sequences $m!\stirling{n}{m}$. The latter admit finite power-sum expansions,
and elementary congruences for integer powers give both periodicity and the
Gauss congruences. Differentiation preserves the form \eqref{eq:general-egf},
which supplies all shifts at once.

The main result is the following.

\begin{theorem}\label{thm:main}
The two conjectures above hold for A158690. More precisely:
\begin{enumerate}
\item for every $k\ge2$,
\[
 a(n+\lambda(k))\equiv a(n)\pmod{k}
\]
for all sufficiently large $n$, where $\lambda$ is the Carmichael function;

\item for every $i\ge0$, every prime $p$, and all $n,r\ge1$,
\[
 a(np^r+i)\equiv a(np^{r-1}+i)\pmod{p^r}.
\]
\end{enumerate}
Since $\lambda(k)\mid\varphi(k)$, the first statement implies Bala's claimed
period bound.
\end{theorem}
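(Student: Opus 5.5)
The plan is to show that $F(t)=G(e^t-1)$ for some $G\in\Z[[y]]$. Then expand the coefficients in the basis $m!\stirling{n}{m}$ and reduce both statements to congruences for the powers $j^n$.

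\textbf{Step 1: $F$ has the form \eqref{eq:general-egf}.} Put $y=e^t-1$, so $e^{-(2j-1)t}=(1+y)^{-(2j-1)}$. This is a power series in $\Z[[y]]$ by the binomial series with negative integer exponent. Hence $1-(1+y)^{-(2j-1)}\in y\Z[[y]]$, and the product over $j\le m$ lies in $y^m\Z[[y]]$. So
\[
G(y):=1+\sum_{m\ge1}\prod_{j=1}^m\bigl(1-(1+y)^{-(2j-1)}\bigr)
\]
converges formally in $\Z[[y]]$. Since $e^t-1$ has zero constant term, substitution commutes with the formal limit, and $F(t)=G(e^t-1)$. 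For the shifts, $\sum_n a(n+i)t^n/n!=F^{(i)}(t)$. The identity $\frac{d}{dt}H(e^t-1)=\bigl((1+y)H'(y)\bigr)\big|_{y=e^t-1}$ preserves the class, so by induction every shifted sequence $a_i$ also has the form \eqref{eq:general-egf} with some $G_i\in\Z[[y]]$. It therefore suffices to prove both congruences for an arbitrary $b(n)$ defined by \eqref{eq:general-egf}, with $G=\sum g_m y^m$.

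\textbf{Step 2: the Stirling expansion.} From $(e^t-1)^m=m!\sum_n\stirling{n}{m}t^n/n!$ we get $b(n)=\sum_{m\le n} g_m\, c_m(n)$, where
\[
c_m(n):=m!\stirling{n}{m}=\sum_{j=0}^m(-1)^{m-j}\binom{m}{j}j^n .
\]
The power-sum formula holds for all $m,n\ge0$, and gives $0$ when $m>n$. For the Gauss congruence, fix $n,r$. The relevant $m$ are at most $np^r$, so both sides are finite $\Z$-combinations of the $c_m$. It is enough to show $j^{np^r}\equiv j^{np^{r-1}}\pmod{p^r}$ for every integer $j$. If $p\nmid j$, this is Euler's theorem, since $\varphi(p^r)\mid np^r-np^{r-1}$. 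If $p\mid j$, both sides are divisible by $p^{np^{r-1}}$, and $np^{r-1}\ge r$. This proves part 2 for every $a_i$.

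\textbf{Step 3: periodicity.} Fix $k\ge2$. For $m\ge k$ we have $k\mid m!$, so $c_m(n)\equiv0\pmod k$ for all $n$. Hence $b(n)\equiv\sum_{m<k}g_mc_m(n)\pmod k$, which is a fixed finite $\Z$-combination of the sequences $j^n$ with $0\le j<k$. Let $e$ be the largest exponent in the prime factorization of $k$. The standard property of $\lambda$ gives $j^{n+\lambda(k)}\equiv j^n\pmod k$ for all $n\ge e$ and all integers $j$. To see this, check it modulo each prime power $q^s\,\|\,k$: for $q\nmid j$ use $\lambda(q^s)\mid\lambda(k)$, and for $q\mid j$ both sides vanish since $n\ge s$. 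So $b(n+\lambda(k))\equiv b(n)\pmod k$ for $n\ge e$. Applying this with $i=0$ gives part 1, and $\lambda(k)\mid\varphi(k)$ gives Bala's bound.

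The only delicate point is Step 1: the legitimacy of the formal identity $F(t)=G(e^t-1)$ and the integrality of $G$. Both reduce to the order estimate $y^m\mid\prod_{j\le m}(\cdots)$ and to the binomial expansion $(1+y)^{-(2j-1)}\in\Z[[y]]$. The remaining steps are routine once the truncation $k\mid m!$ for $m\ge k$ is noted.
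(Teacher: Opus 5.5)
Your proposal is correct and follows essentially the same route as the paper: you write $F(t)=G(e^t-1)$ with $G\in\Z[[y]]$ via the negative binomial series and the order estimate, handle shifts with $(1+y)\frac{d}{dy}$, expand in the basis $m!\stirling{n}{m}$, and reduce both parts to congruences for $j^n$ (truncation by $k\mid m!$ plus the Carmichael/CRT argument with preperiod $\max\alpha$ for part~1, Euler's theorem plus $np^{r-1}\ge r$ for part~2). The differences are only organizational: you reduce to the general class before proving the congruences, and you say explicitly that substitution commutes with the coefficientwise limit.
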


The proof has two steps. Section~\ref{sec:general} proves the congruence
theorem for every series $G(e^t-1)$ with $G\in\Z[[y]]$. Section~\ref{sec:application}
shows directly that the generating function of A158690 has this form.

\section{A general congruence theorem}\label{sec:general}

Let
\[
 G(y)=\sum_{m\ge0}c_m y^m\in\Z[[y]],
\]
and define $b(n)$ by \eqref{eq:general-egf}. We use the standard Stirling
number identity
\begin{equation}
 (e^t-1)^m
 =m!\sum_{n\ge m}\stirling{n}{m}\frac{t^n}{n!},
 \label{eq:stirling-egf}
\end{equation}
where $\stirling{n}{m}$ denotes a Stirling number of the second kind. It
follows that
\begin{equation}
 b(n)=\sum_{m=0}^{n}c_m m!\stirling{n}{m}.
 \label{eq:b-stirling}
\end{equation}
We will also use the inclusion--exclusion formula
\begin{equation}
 m!\stirling{N}{m}
 =\sum_{j=0}^{m}(-1)^{m-j}\binom{m}{j}j^N,
 \qquad N\ge1.
 \label{eq:stirling-powers}
\end{equation}
Both formulas are standard; see, for example, \cite[Chapter~6]{concrete}.

\subsection{Eventual periodicity}

\begin{proposition}\label{prop:periodicity}
Let $G(y)\in\Z[[y]]$, and let $b(n)$ be defined by
\eqref{eq:general-egf}. For every $k\ge2$, $\lambda(k)$ is an eventual period
of $b(n)$ modulo $k$. Since $\lambda(k)\mid\varphi(k)$, this implies that $b(n)$ has an
eventual period dividing $\varphi(k)$.
\end{proposition}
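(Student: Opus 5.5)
The plan is to reduce $b(n)$ modulo $k$ to a fixed finite integer combination of pure powers $j^n$, and then use the periodicity of powers modulo $k$. I will show that $b(n+\lambda(k))\equiv b(n)\pmod k$ for all $n\ge e$, where $e$ is the largest exponent in the prime factorization of $k$.

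\emph{Step 1: truncation.} By \eqref{eq:b-stirling}, $b(n)=\sum_{m\ge0}c_m m!\stirling{n}{m}$, with only finitely many nonzero terms for each $n$. For every $m\ge k$ we have $k\mid m!$, and $\stirling{n}{m}$ is an integer. So every term with $m\ge k$ vanishes modulo $k$, for every $n$. Hence
\[
 b(n)\equiv\sum_{m=0}^{k-1}c_m\, m!\stirling{n}{m}\pmod k\qquad\text{for all }n\ge0,
\]
and this holds even when $n<m$, because then the Stirling number is $0$.

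\emph{Step 2: power-sum expansion.} For $n\ge1$, substitute \eqref{eq:stirling-powers} into each of the finitely many remaining terms. This gives
\[
 b(n)\equiv\sum_{j=0}^{k-1}d_j\, j^n\pmod k,\qquad n\ge1,
\]
with integers $d_j=\sum_{m=j}^{k-1}(-1)^{m-j}\binom{m}{j}c_m$ that do not depend on $n$. The $j=0$ term vanishes for $n\ge1$ and can be dropped. It therefore suffices to prove that $j^{n+\lambda(k)}\equiv j^n\pmod k$ for every integer $j$ and every $n\ge e$.

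\emph{Step 3: powers modulo $k$.} Write $k=\prod p^{e_p}$. By the Chinese remainder theorem it is enough to work modulo each $p^{e_p}$.
\begin{itemize}
\item If $p\nmid j$, then $j^{\lambda(p^{e_p})}\equiv1\pmod{p^{e_p}}$ by the defining property of the Carmichael function. Since $\lambda(p^{e_p})\mid\lambda(k)$, also $j^{\lambda(k)}\equiv1$, and multiplying by $j^n$ gives the claim.
\item If $p\mid j$, then $p^{e_p}\mid j^n$ and $p^{e_p}\mid j^{n+\lambda(k)}$ whenever $n\ge e_p$, so both sides are $0$.
\end{itemize}
Taking $n\ge\max(1,\max_p e_p)$ gives the congruence for all $j$ simultaneously. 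By Step 2 this yields $b(n+\lambda(k))\equiv b(n)\pmod k$ for all such $n$.

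The final assertion follows because any multiple of an eventual period is again an eventual period, and $\lambda(k)\mid\varphi(k)$. The step requiring the most care is Step 3, specifically the non-coprime residues $j$ sharing a prime factor with $k$. These are exactly what force the periodicity to be only eventual, and they fix the threshold $n\ge e$. Steps 1 and 2 are routine once one notices that the truncation at $m<k$ makes the coefficients $d_j$ independent of $n$.
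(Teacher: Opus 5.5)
Your proof is correct and matches the paper's argument step for step. You truncate at $m<k$ using $k\mid m!$, expand via the inclusion--exclusion formula into a fixed finite combination of powers $j^n$, and verify $j^{n+\lambda(k)}\equiv j^n$ modulo each $p^{\alpha}\parallel k$ (the Carmichael property when $p\nmid j$, and vanishing when $p\mid j$ and $n\ge\alpha$), which gives the same preperiod bound $n\ge\max\alpha$; the extra $\max(1,\cdot)$ in your threshold is harmless but redundant since $k\ge2$.
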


\begin{proof}
Fix $k\ge2$. Choose $M$ such that $k\mid M!$; for instance, $M=k$ is enough.
For every $m\ge M$, the integer $m!\stirling{n}{m}$ is divisible by $k$.
Therefore \eqref{eq:b-stirling} gives
\begin{equation}
 b(n)\equiv
 \sum_{m=0}^{M-1}c_m m!\stirling{n}{m}
 \pmod{k}.
 \label{eq:finite-truncation}
\end{equation}
Thus, modulo $k$, the sequence $b(n)$ is a fixed finite integral linear
combination of the power sequences $j^n$, by
\eqref{eq:stirling-powers}.

Write
\[
 k=\prod_{p^\alpha\parallel k}p^\alpha
\]
and put
\[
 N_0=\max_{p^\alpha\parallel k}\alpha.
\]
Fix one of the finitely many integers $j$ occurring in
\eqref{eq:finite-truncation}. We claim that for every $n\ge N_0$,
\begin{equation}
 j^{n+\lambda(k)}\equiv j^n\pmod{k}.
 \label{eq:power-period}
\end{equation}
It is enough to verify the congruence modulo every $p^\alpha\parallel k$.
If $p\nmid j$, then $\lambda(p^\alpha)\mid\lambda(k)$, so the definition of
the Carmichael function gives
\[
 j^{\lambda(k)}\equiv1\pmod{p^\alpha}.
\]
If $p\mid j$, then $n\ge\alpha$ implies
\[
 j^n\equiv j^{n+\lambda(k)}\equiv0\pmod{p^\alpha}.
\]
This proves \eqref{eq:power-period} by the Chinese remainder theorem.
Applying it to every power sequence in the fixed finite sum
\eqref{eq:finite-truncation} yields
\[
 b(n+\lambda(k))\equiv b(n)\pmod{k}
\]
for every $n\ge N_0$. Finally, $\lambda(k)\mid\varphi(k)$, so Bala's stated
period bound follows.
\end{proof}

\begin{remark}
The proof gives the explicit, though not necessarily sharp, preperiod bound
\[
 n\ge \max_{p^\alpha\parallel k}\alpha.
\]
For $k=16$, this gives $n\ge4$. Bala's example on the OEIS entry has exactly
four initial terms before the displayed period begins.
\end{remark}

\subsection{Gauss congruences}

The second conjecture rests on the following elementary congruence for powers.

\begin{lemma}\label{lem:power-gauss}
Let $p$ be prime and let $j,n,r$ be integers with $j\ge0$ and $n,r\ge1$.
Then
\begin{equation}
 j^{np^r}\equiv j^{np^{r-1}}\pmod{p^r}.
 \label{eq:power-gauss}
\end{equation}
\end{lemma}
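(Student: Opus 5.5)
We split according to whether $p$ divides $j$.

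\emph{Case $p\nmid j$.} Here $j$ is a unit modulo $p^r$. Since $\varphi(p^r)=p^{r-1}(p-1)$ divides $p^{r-1}(p^r-p^{r-1})$, Euler's theorem gives
\[
 j^{p^{r-1}(p^r-p^{r-1})}\equiv 1\pmod{p^r}.
\]
This is not quite the needed exponent, so I would instead use the standard lifting fact: if $x\equiv y\pmod{p^s}$ with $s\ge1$, then $x^p\equiv y^p\pmod{p^{s+1}}$. To see it, write $x=y+p^s u$ and expand binomially. Every term past the first is divisible by $p^{s+1}$: the linear term carries $p\cdot p^s$, and the higher terms carry $p^{2s}$ with $2s\ge s+1$. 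The case $p=2$, $s=1$ also works, since $(y+2u)^2=y^2+4uy+4u^2$.

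\emph{The lifting argument.} Put $x=j^{np}$ and $y=j^{n}$. Fermat gives $x\equiv y\pmod p$ for every integer $j$, so the hypothesis $p\nmid j$ is not even needed here. Applying the lifting fact $r-1$ times yields $x^{p^{r-1}}\equiv y^{p^{r-1}}\pmod{p^r}$, which is exactly \eqref{eq:power-gauss}. This handles all $j$ at once, including $j=0$, and supersedes the case split.

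\emph{Case $p\mid j$ as a check.} Both exponents satisfy $np^{r-1}\ge p^{r-1}\ge r$, so both sides are divisible by $p^r$. This gives an independent verification.

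The only delicate step is the lifting lemma, in particular checking it at $p=2$; the binomial expansion settles it uniformly. The base case is just Fermat's little theorem $j^p\equiv j\pmod p$, raised to the $n$-th power.
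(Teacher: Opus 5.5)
Your proof is correct, but its main line differs from the paper's.

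\textbf{The paper's route.} It splits into three cases.
\begin{itemize}
\item $j=0$ is trivial.
\item For $p\nmid j$, the exponent difference is $np^r-np^{r-1}=np^{r-1}(p-1)=n\varphi(p^r)$, so Euler's theorem applies directly.
\item For $p\mid j$, it uses $v_p\bigl(j^{np^{r-1}}\bigr)=np^{r-1}v_p(j)\ge p^{r-1}\ge r$, which is exactly your closing ``check.''
\end{itemize}
Your opening attempt at the unit case was abandoned needlessly. You computed the unrelated quantity $p^{r-1}(p^r-p^{r-1})$, whereas the needed exponent difference is itself a multiple of $\varphi(p^r)$.

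\textbf{Your route.} You apply Fermat to get $(j^n)^p\equiv j^n\pmod p$. You then apply the lifting step $x\equiv y\pmod{p^s}\Rightarrow x^p\equiv y^p\pmod{p^{s+1}}$ a total of $r-1$ times. This is sound. Your binomial bound (the linear term carries $p^{s+1}$, and terms with $k\ge2$ carry $p^{ks}$ with $ks\ge 2s\ge s+1$) covers $p=2$ with no special treatment. The argument also needs no case split: it works verbatim for every integer $j$, including $0$ and negative values.

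\textbf{Comparison.} The paper's route buys brevity: it uses only the exponent of the unit group modulo $p^r$ plus a crude valuation count. Yours buys uniformity and isolates the mechanism more cleanly. The lifting step holds in any commutative ring, so the only arithmetic input is the base congruence modulo $p$, independent of whether $j$ is a unit.
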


\begin{proof}
If $j=0$, then both sides of \eqref{eq:power-gauss} are $0$, since
$n,r\ge1$ and hence both exponents are positive. Thus assume $j\ge1$.

If $p\nmid j$, then
\[
 np^r-np^{r-1}=np^{r-1}(p-1)=n\varphi(p^r),
\]
so Euler's theorem gives \eqref{eq:power-gauss}.

Now suppose $p\mid j$. Since $j\ge1$ and $n\ge1$,
\[
 \vp\!\left(j^{np^{r-1}}\right)
 =np^{r-1}\vp(j)\ge p^{r-1}\ge r.
\]
The last inequality holds for every prime $p$ and $r\ge1$. Hence
$j^{np^{r-1}}\equiv0\pmod{p^r}$, and the same is true of $j^{np^r}$.
\end{proof}

\begin{proposition}\label{prop:gauss}
Let $G(y)\in\Z[[y]]$, and let $b(n)$ be defined by
\eqref{eq:general-egf}. Then for every prime $p$ and all $n,r\ge1$,
\begin{equation}
 b(np^r)\equiv b(np^{r-1})\pmod{p^r}.
 \label{eq:general-gauss}
\end{equation}
\end{proposition}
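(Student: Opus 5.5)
The plan is to reduce Proposition~\ref{prop:gauss} to Lemma~\ref{lem:power-gauss} through the Stirling expansion \eqref{eq:b-stirling} and the power-sum formula \eqref{eq:stirling-powers}. Fix a prime $p$ and $n,r\ge1$, and write $N_1=np^r$ and $N_2=np^{r-1}$; both are at least $1$, so \eqref{eq:stirling-powers} applies to each. By \eqref{eq:b-stirling},
\[
 b(N)=\sum_{m=0}^{N}c_m\, m!\stirling{N}{m}.
\]
Since $\stirling{N}{m}=0$ for $m>N$, we may sum over all $m\ge0$. For a fixed exponent $N$ only finitely many terms are nonzero, so the range of summation is not an issue.

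The first step is to compare the two sequences term by term. It suffices to show, for every $m\ge0$,
\[
 m!\stirling{np^r}{m}\equiv m!\stirling{np^{r-1}}{m}\pmod{p^r},
\]
because multiplying by the integer $c_m$ and summing over the finitely many $m\le np^r$ gives \eqref{eq:general-gauss}. Here the terms with $np^{r-1}<m\le np^r$ vanish on the right-hand side, and the same congruence still covers them.

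The second step proves this congruence from \eqref{eq:stirling-powers}. Both sides are expressed with the same integer coefficients $(-1)^{m-j}\binom{m}{j}$, namely
\[
 \sum_{j=0}^{m}(-1)^{m-j}\binom{m}{j}\bigl(j^{np^r}-j^{np^{r-1}}\bigr).
\]
By Lemma~\ref{lem:power-gauss}, each bracket is divisible by $p^r$, including the case $j=0$. Hence the whole sum is divisible by $p^r$. Note that \eqref{eq:stirling-powers} holds for all $m$, including $m>N$, since then both sides are $0$: the alternating sum of $\binom{m}{j}j^N$ vanishes when $N<m$.

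There is no real obstacle here. The only points requiring care are that both exponents are positive, so the $0^N$ convention never arises and \eqref{eq:stirling-powers} is valid, and that the identity \eqref{eq:stirling-powers} is used for $m$ beyond $N$. Both are routine.
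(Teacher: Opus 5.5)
Your proposal is correct and follows the paper's proof essentially verbatim: expand $b(N)$ in the Stirling basis via \eqref{eq:b-stirling}, rewrite each $m!\stirling{N}{m}$ with the power-sum formula \eqref{eq:stirling-powers}, apply Lemma~\ref{lem:power-gauss} to each $j$, and sum term by term over $m$. You also state explicitly that \eqref{eq:stirling-powers} remains valid for $m>N$, because the alternating sum vanishes. That is a useful clarification of a point the paper handles only by appeal to the convention $\stirling{N}{m}=0$ for $m>N$.
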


\begin{proof}
By Lemma~\ref{lem:power-gauss}, each term in
\eqref{eq:stirling-powers} satisfies
\[
 j^{np^r}\equiv j^{np^{r-1}}\pmod{p^r}.
\]
Therefore, for every $m\ge0$,
\begin{equation}
 m!\stirling{np^r}{m}
 \equiv
 m!\stirling{np^{r-1}}{m}
 \pmod{p^r}.
 \label{eq:stirling-gauss}
\end{equation}
Here we use the usual convention $\stirling{N}{m}=0$ when $m>N$, so the
congruence is valid even when $np^{r-1}<m\le np^r$.

Using \eqref{eq:b-stirling} on both sides and extending the shorter sum by
zero terms,
\begin{align*}
 b(np^r)-b(np^{r-1})
 &=\sum_{m=0}^{np^r} c_m m!
 \left(
 \stirling{np^r}{m}-\stirling{np^{r-1}}{m}
 \right),
\end{align*}
which is divisible by $p^r$ term by term by
\eqref{eq:stirling-gauss}. This proves \eqref{eq:general-gauss}.
\end{proof}

It remains to explain why every shift of $b(n)$ is again covered by the same
proposition.

\begin{proposition}\label{prop:shifts}
Under the hypotheses of Proposition~\ref{prop:gauss}, fix $i\ge0$ and set
$b_i(n)=b(n+i)$. Then there exists $G_i(y)\in\Z[[y]]$ such that
\[
 G_i(e^t-1)=\sum_{n\ge0}b_i(n)\frac{t^n}{n!}.
\]
Consequently, for every prime $p$ and all $n,r\ge1$,
\[
 b(np^r+i)\equiv b(np^{r-1}+i)\pmod{p^r}.
\]
\end{proposition}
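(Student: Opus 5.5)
The plan is to show that differentiation with respect to $t$ preserves the class of series of the form $G(e^t-1)$ with $G\in\Z[[y]]$, and then induct on $i$. Shifting a sequence by one corresponds to differentiating its exponential generating function: if $B(t)=\sum_{n\ge0}b(n)t^n/n!$, then $B'(t)=\sum_{n\ge0}b(n+1)t^n/n!$. More generally, the $i$-th derivative $B^{(i)}(t)$ is the exponential generating function of $b_i(n)=b(n+i)$. It is therefore enough to treat a single derivative and iterate.

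For the single derivative, set $y=e^t-1$, so that $dy/dt=e^t=1+y$. The chain rule for formal power series then gives
\[
 \frac{d}{dt}\,G(e^t-1)=(1+y)G'(y)\big|_{y=e^t-1}.
\]
Define the operator $D$ on $\Z[[y]]$ by $(DG)(y)=(1+y)G'(y)$. Since $G$ has integer coefficients, so does $G'$ (the coefficient of $y^{m-1}$ is $mc_m$), and multiplying by $1+y$ keeps them integral. Hence $DG\in\Z[[y]]$, and we may take $G_i=D^iG$. The resulting identity $G_i(e^t-1)=\sum_n b_i(n)t^n/n!$ follows by induction on $i$, with $G_0=G$ as the base case.

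The only point needing care is the formal chain rule for substituting $e^t-1$, which has zero constant term, into a power series. We can check it directly on monomials: $\frac{d}{dt}(e^t-1)^m=m(e^t-1)^{m-1}e^t$. Extending this termwise is legitimate because $(e^t-1)^m$ has order $m$ in $t$, so each coefficient of $t^n$ involves only finitely many terms. This is the step I expect to be the main obstacle, though it is mild.

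Finally, the congruence follows by applying Proposition~\ref{prop:gauss} to $G_i$ in place of $G$. Its sequence is exactly $b_i$, so
\[
 b_i(np^r)\equiv b_i(np^{r-1})\pmod{p^r},
\]
which is the claimed congruence $b(np^r+i)\equiv b(np^{r-1}+i)\pmod{p^r}$.
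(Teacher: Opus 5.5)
Your proposal is correct and matches the paper's proof: both set $G_i=D^iG$ with $D=(1+y)\frac{d}{dy}$, use that $i$-fold differentiation of the exponential generating function shifts the sequence by $i$ and that $D$ preserves $\Z[[y]]$, and then apply Proposition~\ref{prop:gauss} to $G_i$. Your explicit check of the formal chain rule on the monomials $(e^t-1)^m$, justified by their order-$m$ vanishing at $t=0$, is a small detail the paper leaves implicit.
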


\begin{proof}
Differentiating the exponential generating function $i$ times gives
\[
 \frac{d^i}{dt^i}G(e^t-1)
 =\sum_{n\ge0}b(n+i)\frac{t^n}{n!}.
\]
Put $y=e^t-1$. The formal chain rule gives
\[
 \frac{d}{dt}=(1+y)\frac{d}{dy}.
\]
Hence, with the operator
\[
 D=(1+y)\frac{d}{dy},
\]
we have
\[
 \frac{d^i}{dt^i}G(e^t-1)
 =(D^iG)(e^t-1).
\]
Differentiation maps $\Z[[y]]$ to $\Z[[y]]$, as does multiplication by
$1+y$, so
\[
 G_i(y):=D^iG(y)\in\Z[[y]].
\]
Proposition~\ref{prop:gauss}, applied to $G_i$, proves the shifted
congruence.
\end{proof}

Combining Propositions~\ref{prop:periodicity}, \ref{prop:gauss}, and
\ref{prop:shifts} gives a general theorem that may be useful independently of
A158690.

\begin{theorem}\label{thm:general}
Let $G(y)\in\Z[[y]]$ and define $b(n)$ by
\[
 G(e^t-1)=\sum_{n\ge0}b(n)\frac{t^n}{n!}.
\]
Then:
\begin{enumerate}
\item for every $k\ge2$, $\lambda(k)$ is an eventual period of $b(n)$ modulo
$k$;
\item for every $i\ge0$, every prime $p$, and all $n,r\ge1$,
\[
 b(np^r+i)\equiv b(np^{r-1}+i)\pmod{p^r}.
\]
\end{enumerate}
\end{theorem}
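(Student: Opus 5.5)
Theorem~\ref{thm:general} simply packages the three propositions already proved in this section, so the plan is to derive each part directly from them rather than to argue anything new.

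For part~1, fix $k\ge2$ and apply Proposition~\ref{prop:periodicity} to $G$. It gives $b(n+\lambda(k))\equiv b(n)\pmod{k}$ for all $n\ge\max_{p^\alpha\parallel k}\alpha$, which is precisely the statement that $\lambda(k)$ is an eventual period of $b(n)$ modulo $k$. Nothing further is needed here.

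For part~2, fix $i\ge0$. Proposition~\ref{prop:shifts} produces $G_i=D^iG\in\Z[[y]]$ whose composite $G_i(e^t-1)$ is the exponential generating function of $b_i(n)=b(n+i)$. Applying Proposition~\ref{prop:gauss} to $G_i$ yields $b_i(np^r)\equiv b_i(np^{r-1})\pmod{p^r}$ for every prime $p$ and all $n,r\ge1$, which is the claimed shifted congruence. The case $i=0$ is Proposition~\ref{prop:gauss} itself, since $D^0G=G$.

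There is no real obstacle, because the substantive work is in the earlier results. The one point worth checking is that the hypotheses match: each proposition requires only $G\in\Z[[y]]$, and closure of $\Z[[y]]$ under $D$ is already established in the proof of Proposition~\ref{prop:shifts}.
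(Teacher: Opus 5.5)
Your proposal is correct and matches the paper's proof. The paper likewise obtains part~1 from Proposition~\ref{prop:periodicity}, and part~2 by applying Proposition~\ref{prop:gauss} to $G_i=D^iG\in\Z[[y]]$ as constructed in Proposition~\ref{prop:shifts}.
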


\section{Application to A158690}\label{sec:application}

We now verify that the generating function \eqref{eq:A158690-egf} is of the
form required by Theorem~\ref{thm:general}.

Set
\[
 y=e^t-1,
\qquad\text{so that}\qquad
 e^{-t}=(1+y)^{-1}.
\]
For a positive integer $d$,
\begin{equation}
 (1+y)^{-d}
 =\sum_{r\ge0}(-1)^r\binom{d+r-1}{r}y^r
 \in\Z[[y]].
 \label{eq:negative-binomial}
\end{equation}
In particular,
\[
 1-(1+y)^{-d}\in y\Z[[y]].
\]
Define
\begin{equation}
 H(y)
 =1+\sum_{m\ge1}\prod_{j=1}^{m}
 \left(1-(1+y)^{-(2j-1)}\right).
 \label{eq:H}
\end{equation}
The $m$th product in \eqref{eq:H} belongs to $y^m\Z[[y]]$. Therefore the
infinite sum is well defined coefficientwise: the coefficient of $y^N$
receives contributions only from $m\le N$. Equation
\eqref{eq:negative-binomial} also shows that every coefficient is an integer.
Thus
\[
 H(y)\in\Z[[y]].
\]
Finally, substituting $y=e^t-1$ into \eqref{eq:H} gives
\begin{align*}
 H(e^t-1)
 &=1+\sum_{m\ge1}\prod_{j=1}^{m}
 \left(1-e^{-(2j-1)t}\right)\\
 &=F(t).
\end{align*}
Hence the sequence A158690 satisfies the hypotheses of
Theorem~\ref{thm:general}. The first conclusion proves Bala's eventual
periodicity conjecture, with the stronger eventual period $\lambda(k)$, and
the second conclusion proves all of Bala's shifted Gauss congruences. This
proves Theorem~\ref{thm:main}.

\section{Concluding remarks}

The proof uses very little of the particular product defining A158690. Its
role is only to show that the exponential generating function can be written
as $G(e^t-1)$ for some $G\in\Z[[y]]$, in the sense of
Theorem~\ref{thm:general}. Once that representation is available, the two
apparently different conjectures come from the same Stirling-number basis.

For eventual periodicity, the factorial factor in
$m!\stirling{n}{m}$ makes all sufficiently large $m$ vanish modulo a fixed
modulus, leaving a finite combination of power sequences. For the Gauss
congruences, the same power sequences satisfy the required prime-power
congruence term by term. The fact that series of the form $G(e^t-1)$ with $G\in\Z[[y]]$ remain of
this form under differentiation then explains why every shift appears in
Bala's second conjecture.

Theorem~\ref{thm:general} therefore applies without change to any other
integer sequence whose exponential generating function can be written as
$G(e^t-1)$ with $G\in\Z[[y]]$.

\section*{Acknowledgments}

The author thanks the contributors to the OEIS entry for A158690, especially
Peter Bala for formulating the two congruence conjectures. The author used AI
tools for exploratory discussion, computational checking, drafting assistance,
and assistance with the Lean formalization. The author independently checked
the mathematical arguments, computations, and final manuscript. Bala's two
original congruence conjectures have additionally been formally verified in the
Lean~4 proof assistant using Mathlib.\footnote{Formalization available at
\url{https://github.com/ahaankallat/a158690-lean}. The top-level Lean statements
are \texttt{A158690.bala\_conjecture\_one} and
\texttt{A158690.bala\_conjecture\_two}. The Lean development verifies Bala's
original $\varphi(k)$ period statement; the stronger $\lambda(k)$ refinement
proved in Proposition~\ref{prop:periodicity} is not needed for the conjecture.}

\end{document}